\documentclass[11pt]{article}
\usepackage{amsmath, amsthm}
\usepackage{latexsym}
\usepackage{graphicx, psfrag}
\setlength{\textwidth}{6.3in}
\setlength{\textheight}{8.7in}
\setlength{\topmargin}{0pt}
\setlength{\headsep}{0pt}
\setlength{\headheight}{0pt}
\setlength{\oddsidemargin}{0pt}
\setlength{\evensidemargin}{0pt}

\makeatletter
\newfont{\footsc}{cmcsc10 at 8truept}
\newfont{\footbf}{cmbx10 at 8truept}
\newfont{\footrm}{cmr10 at 10truept}
\makeatother
\pagestyle{plain}

\newtheorem{theorem}{\bf Theorem}
\newtheorem{proposition}{\bf Proposition}
\newtheorem{lemma}{\bf Lemma}

\newtheorem{corollary}{\bf Corollary}

\begin{document}
\title{The Shape of the Noncentral $\chi^2$ Density}

\author{Yaming Yu\\
\small Department of Statistics\\[-0.8ex]
\small University of California\\[-0.8ex] 
\small Irvine, CA 92697, USA\\[-0.8ex]
\small \texttt{yamingy@uci.edu}}

\date{}
\maketitle

\begin{abstract}
A noncentral $\chi^2$ density is log-concave if the degree of freedom is $\nu\geq 2$.  We complement this known result by showing that, for each $0<\nu<2$, there exists $\lambda_\nu>0$ such that the $\chi^2$ with $\nu$ degrees of freedom and noncentrality parameter $\lambda$ has a decreasing density if $\lambda\leq \lambda_\nu$, and is bi-modal otherwise.  The critical $\lambda_\nu$ is characterized by an equation involving a ratio of modified Bessel functions.  When an interior mode exists we derive precise bounds on its location. 

{\bf Keywords:} distribution theory; log-concavity; log-convexity; noncentral distribution; special functions; unimodality. 

{\bf MSC2010 Classifications:} 60E05; 62E15; 33C10.
\end{abstract}

\section{Introduction}
The central $\chi^2$ density with $\nu$ degrees of freedom is log-concave if $\nu\geq 2$ and log-convex otherwise.  The noncentral $\chi^2$ density with $\nu$ degrees of freedom and noncentrality parameter $\lambda$ is still log-concave if $\nu\geq 2$.  It is decreasing if $0< \nu <2$ and $\lambda$ is small.  When $0<\nu<2$ and $\lambda$ is large, however, a new shape emerges which some may find surprising: the noncentral $\chi^2$ density can be bi-modal! 

This note characterizes the noncentral $\chi^2$ density as either log-concave, decreasing, or bi-modal (the first two categories overlap when $\nu=2$) by delineating the range of parameters for each.  Our contribution is a mathematical derivation of some facts about the noncentral $\chi^2$ that are ``folklore,'' i.e., either tacitly assumed or supported by numerical evidence.  This happens to be an interesting exercise in special functions.  Part of the criteria is expressed in terms of a modified Bessel function equation, and the method may yield further results (e.g., inequalities) concerning  modified Bessel functions. 

\section{Definitions}
The central $\chi^2$ density with $\nu>0$ degrees of freedom is given by 
$$p_{\nu, 0}(x)=\frac{e^{-x/2}}{2\Gamma(\nu/2)} \left(\frac{x}{2}\right)^{\nu/2-1},\quad x>0.$$
The noncentral $\chi^2$ density with $\nu$ degrees of freedom and noncentrality parameter $\lambda$ is defined as 
\begin{equation}
\label{dens}
p_{\nu, \lambda}(x)=\sum_{k=0}^\infty \frac{e^{-\lambda/2}}{k!}\left(\frac{\lambda}{2}\right)^k p_{\nu+2k, 0}(x),\quad x>0.
\end{equation}
That is, $p_{\nu, \lambda}$ is a Poisson mixture of central $\chi^2$ densities.  When $\nu$ is a positive integer, the noncentral $\chi^2$ arises naturally as the distribution of quadratic forms in normal variables.  Specifically, suppose $X_1,\ldots, X_\nu$ are independent normal random variables with means $\mu_k,\ k=1,\ldots, \nu,$ and unit variances, then the density of $\sum_{k=1}^\nu X_k^2$ is precisely $p_{\nu, \lambda}$ given by (\ref{dens}) with $\lambda=\sum_{k=1}^\nu \mu_k^2$.  The definition (\ref{dens}), however, does not require $\nu$ to be an integer. 

Distributional properties of the noncentral $\chi^2$ can be difficult to obtain because the density is ``not in closed form''.  Another expression for $p_{\nu, \lambda}$, not necessarily more tractable, is 
\begin{equation}
\label{dens1}
p_{\nu, \lambda}(x)=\frac{1}{2} e^{-(x+\lambda)/2} \left(\frac{x}{\lambda}\right)^{(\nu-2)/4} I_{(\nu-2)/2}(\sqrt{\lambda x}).
\end{equation}
Here $I_\nu(x)$ denotes the modified Bessel function of the first kind given by  
$$I_\nu(x) = \sum_{k=0}^\infty \frac{(x/2)^{2k+\nu}}{k!\Gamma(\nu+k+1)}.$$
The ratio
\begin{equation}
\label{r}
r_\nu(x)=\frac{I_\nu(x)}{I_{\nu-1}(x)}
\end{equation}
plays a crucial role in our analysis.  We use $I'_\nu(x)$ and $r'_\nu(x)$ to denote derivatives with respect to $x$. 

We are concerned with shape properties of the noncentral $\chi^2$ such as unimodality, log-concavity and log-convexity.  A function $f(x)$ is unimodal if there exists $x_*$ such that $f(x)$ increases for $x\leq x_*$ and decreases for $x\geq x_*$ ($x_*$ is called a mode).  A nonnegative function $f$ is log-concave (respectively, log-convex) if $\log f$ is concave (respectively, convex).  It is well known that log-concavity implies unimodality.  Since we focus on probability densities supported on $(0, \infty)$, all statements concerning monotonicity and log-convexity refer to this interval. 

\section{Relevant Literature}
Johnson, Kotz and Balakrishnan (1995) discuss the noncentral $\chi^2$ distribution theory in great detail.  Saxena and Alam (1982) study estimation methods for the noncentrality parameter.  Ding (1992) and Kn\"{u}sel and Bablok (1996) propose methods to compute the density and distribution functions numerically.  Concerning distributional properties, Karlin (1968) has proved that if $\nu\geq 2$ then the noncentral density $p_{\nu, \lambda}$ is log-concave.  Das Gupta and Sarkar (1984) and Finner and Roters (1997) derive related results, including log-concavity of the distribution and survival functions and generalizations to noncentral $F$ and $t$ distributions; see also van Aubel and Gawronski (2003). 

Not only is the noncentral $\chi^2$ distribution important for power calculations in hypothesis testing, it also has engineering applications including radar detection and communications over fading channels.  It is intimately related to the generalized Marcum Q function $Q_\nu(a, b)$, which is of great interest in information theory and communications.  Specifically, $Q_\nu(a, b)$ is the survival function of a noncentral $\chi^2$ distribution with $2\nu$ degrees of freedom and noncentrality parameter $a^2$ evaluated at $b^2$: 
$$Q_\nu(a, b)= \int_{b^2}^\infty p_{2\nu, a^2}(x)\, {\rm d}x,\quad a,\, b\geq 0.$$ 
See, for example, Marcum (1960), Nuttall (1975), Shnidman (1989), Helstrom (1992), Ross (1999), Simon and Alouini (2003), Li and Kam (2006), Sun, Baricz, and Zhou (2010), and Yu (2011).  Sun {\it et al.} (2010) also derive log-concavity results.  Yu (2011) establishes the log-concavity of $Q_\nu(a, b)$ in $b$ for $\nu\geq 1/2$, confirming a conjecture of Sun {\it et al.} (2010).  This work uses methods that are similar to those of Yu (2011). 

\section{Main Results}
For $\nu\in (0, 2)$ define 
\begin{equation*}
g_\nu(\lambda)\equiv r_{\nu/2}(\sqrt{\lambda(\lambda+\nu-4)}) - \frac{\lambda-2}{\sqrt{\lambda(\lambda+\nu-4)}},\quad \lambda > 4-\nu,
\end{equation*}
where $r_\nu(x)$ is given by (\ref{r}).  We rely on a sign property of $g_\nu(\lambda)$. 
\begin{lemma}
\label{lem1}
For each $\nu\in (0, 2)$ there exists a unique $\lambda_\nu\in (4-\nu, \infty)$ such that $g_\nu(\lambda)> 0$ or $<0$ according as $\lambda >\lambda_\nu$ or $ <\lambda_\nu$.
\end{lemma}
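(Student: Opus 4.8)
The plan is to reduce the whole question to a first-order analysis of $g_\nu$ driven by the Riccati equation satisfied by the Bessel ratio. First I would write $\mu=\nu/2\in(0,1)$ and introduce the substitution $u=u(\lambda)=\sqrt{\lambda(\lambda+\nu-4)}$, which is a strictly increasing bijection of $(4-\nu,\infty)$ onto $(0,\infty)$ with $u'(\lambda)=(2\lambda+\nu-4)/(2u)$ (note $2\lambda+\nu-4>4-\nu>0$ on the domain, and $I_\alpha(x)>0$ for $x>0,\ \alpha>-1$, so $r_\mu$ is well defined and positive). In this notation $g_\nu(\lambda)=r_\mu(u)-(\lambda-2)/u$. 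The two endpoint behaviours are then easy to record. As $\lambda\downarrow 4-\nu$ we have $u\downarrow 0$, so $r_\mu(u)\to 0$, while $\lambda-2\to 2-\nu>0$ forces $(\lambda-2)/u\to+\infty$; hence $g_\nu\to-\infty$. At the other end, the large-argument asymptotics $r_\mu(x)=1-\frac{2\mu-1}{2x}+O(x^{-2})$ together with $u=\lambda+\frac{\nu-4}{2}+O(1/\lambda)$ give, after the leading terms cancel, $g_\nu(\lambda)=\frac{1}{2\lambda}+O(\lambda^{-2})>0$ for all large $\lambda$. By the intermediate value theorem $g_\nu$ has at least one zero in $(4-\nu,\infty)$.

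For uniqueness and the sign pattern the key idea is to show that $g_\nu$ is strictly increasing at every zero, so that each zero is an up-crossing and there can be only one. I would first record the Riccati identity
\[
r_\mu'(x) = 1 - \frac{2\mu-1}{x}\, r_\mu(x) - r_\mu^2(x),
\]
which follows from the standard recurrences $I_\mu'(x)=I_{\mu-1}(x)-(\mu/x)I_\mu(x)$ and $I_{\mu-1}'(x)=I_\mu(x)+((\mu-1)/x)I_{\mu-1}(x)$ upon dividing $r_\mu'=(I_\mu'I_{\mu-1}-I_\mu I_{\mu-1}')/I_{\mu-1}^2$ by $I_{\mu-1}^2$. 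Differentiating $g_\nu$ and inserting $u'$ gives $g_\nu'(\lambda)=r_\mu'(u)\,u'-1/u+(\lambda-2)u'/u^2$.

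The crux is to evaluate this at a point where $g_\nu(\lambda)=0$, i.e. $r_\mu(u)=(\lambda-2)/u$. Substituting this into the Riccati identity (so that $r_\mu'(u)=1-[(\nu-1)(\lambda-2)+(\lambda-2)^2]/u^2$, using $2\mu-1=\nu-1$) and then using $u^2=\lambda(\lambda+\nu-4)$ to clear denominators, I expect the interior terms to collapse dramatically to
\[
u\, g_\nu'(\lambda) = \frac{\lambda+\nu-4}{\lambda} \quad\text{whenever } g_\nu(\lambda)=0 .
\]
Since $\lambda>4-\nu$ makes $\lambda+\nu-4>0$ while $\lambda,u>0$, this is strictly positive, so $g_\nu'>0$ at every zero. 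A short continuity argument then finishes: zeros are isolated (each has $g_\nu'\neq 0$) and bounded away from $4-\nu$ (where $g_\nu<0$), so a smallest zero $a$ exists; if a second zero existed, let $b>a$ be the next one, on $(a,b)$ the function has no zeros and is positive just right of $a$, hence positive throughout, so it returns to $0$ at $b$ from above and $g_\nu'(b)\le 0$, contradicting $g_\nu'(b)>0$. Thus the zero $\lambda_\nu$ is unique, $g_\nu<0$ on $(4-\nu,\lambda_\nu)$ and $g_\nu>0$ on $(\lambda_\nu,\infty)$.

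The main obstacle I anticipate is twofold. The genuinely delicate step is the right-tail positivity: both $r_\mu(u)$ and $(\lambda-2)/u$ tend to $1$, so establishing $g_\nu>0$ requires the $O(1/\lambda)$ corrections to be controlled rigorously rather than merely formally, and it may be cleaner to invoke an explicit two-sided bound on $r_\mu$ than a bare asymptotic expansion. The other step demanding care is the algebraic simplification at the zeros; the cancellation down to the clean form $(\lambda+\nu-4)/\lambda$ is precisely what makes the argument work, and it is worth checking that the Riccati substitution, not an accident of bookkeeping, is the reason all the $u^{-2}$ and $(\lambda-2)$ terms disappear.
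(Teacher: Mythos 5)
Your proposal is correct and takes essentially the same route as the paper: identical endpoint analysis ($g_\nu\to-\infty$ as $\lambda\downarrow 4-\nu$, and $\lambda g_\nu(\lambda)\to 1/2$ as $\lambda\to\infty$ via the large-argument expansion of $r_{\nu/2}$), followed by the Riccati identity to show the derivative is positive at every zero, hence a unique up-crossing. Your claimed collapse $u\,g_\nu'(\lambda)=(\lambda+\nu-4)/\lambda$ at a zero is correct and is precisely the paper's formula $\partial g_\nu/\partial\lambda=\sqrt{\lambda+\nu-4}\,/\lambda^{3/2}$ multiplied by $u=\sqrt{\lambda(\lambda+\nu-4)}$.
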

In other words, as $\lambda$ increases, $g_\nu(\lambda)$ crosses zero exactly once from below.  The unique crossing point $\lambda_\nu$ can be computed by bisection or functional iteration.  A small sample is recorded in Table~1.  Proposition~\ref{prop1} recognizes the limiting cases. 
\begin{proposition}
\label{prop1}
We have 
$$\lim_{\nu\downarrow 0} \lambda_\nu = 4,\quad{\rm and}\quad \lim_{\nu\uparrow 2}\lambda_\nu = 2.$$ 
\end{proposition}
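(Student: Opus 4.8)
The plan is to sandwich the root $\lambda_\nu$ of $g_\nu$ between the left endpoint of its domain and a point where $g_\nu$ is visibly positive, exploiting the single-crossing property of Lemma~\ref{lem1}. The lower bound is immediate in both limits: since Lemma~\ref{lem1} places $\lambda_\nu$ in $(4-\nu,\infty)$, we have $\lambda_\nu>4-\nu$, whence $\liminf_{\nu\uparrow 2}\lambda_\nu\ge 2$ and $\liminf_{\nu\downarrow 0}\lambda_\nu\ge 4$. Everything thus reduces to matching upper bounds, and for those I would fix a small $\epsilon>0$, evaluate $g_\nu$ at the conjectured limit shifted up by $\epsilon$, pass to the limit in $\nu$, and show the limiting value is strictly positive. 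Since $g_\nu>0$ only for $\lambda>\lambda_\nu$, this forces $\lambda_\nu$ below the shifted target, and letting $\epsilon\downarrow 0$ gives the matching $\limsup$.

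For $\nu\uparrow 2$ I would take $\lambda=2+\epsilon$. Writing $t=\sqrt{\lambda(\lambda+\nu-4)}$, as $\nu\uparrow 2$ one has $t\to t_*:=\sqrt{\epsilon(2+\epsilon)}>0$, and by continuity of the Bessel ratio in its order $r_{\nu/2}(t)\to r_1(t_*)$, so $g_\nu(2+\epsilon)\to r_1(t_*)-\epsilon/t_*$. Using the convergent small-argument expansion $r_1(t)=t/2-t^3/16+O(t^5)$ together with $\epsilon=t_*^2/2-t_*^4/8+\cdots$ (from $\epsilon(2+\epsilon)=t_*^2$), the leading terms cancel and
$$r_1(t_*)-\frac{\epsilon}{t_*}=\frac{t_*^3}{16}+O(t_*^5)>0$$
for small $\epsilon$. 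Hence $\limsup_{\nu\uparrow 2}\lambda_\nu\le 2+\epsilon$ for all small $\epsilon$, giving $\le 2$.

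The endpoint $\nu\downarrow 0$ is analogous but needs one extra ingredient: the value of $r_\nu$ at order $0$. Because $\mu\mapsto I_\mu(t)$ is continuous and $I_{-1}(t)=I_1(t)$, the definition (\ref{r}) yields directly $r_0(t)=I_0(t)/I_1(t)$, and $r_{\nu/2}(t)\to r_0(t)$ as $\nu\downarrow 0$ (equivalently, letting $\nu\to0$ term-by-term in the two series, the $k=0$ term of $I_{(\nu-2)/2}$ drops out since $1/\Gamma(\nu/2)\to0$). Taking $\lambda=4+\epsilon$, so that $t\to t_*:=\sqrt{\epsilon(4+\epsilon)}$, gives $g_\nu(4+\epsilon)\to I_0(t_*)/I_1(t_*)-(2+\epsilon)/t_*$. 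Feeding in $I_0(t)/I_1(t)=2/t+t/4-t^3/96+O(t^5)$ and $\epsilon=t_*^2/4-t_*^4/64+\cdots$, the $2/t_*$ and $t_*/4$ contributions cancel against $(2+\epsilon)/t_*$ and one is left with
$$\frac{I_0(t_*)}{I_1(t_*)}-\frac{2+\epsilon}{t_*}=\frac{t_*^3}{192}+O(t_*^5)>0,$$
yielding $\limsup_{\nu\downarrow 0}\lambda_\nu\le 4$.

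The main obstacle is that in each case the conjectured limit coincides with the moving left endpoint $4-\nu$ of the domain, so the naive leading-order balance in $g_\nu$ vanishes identically; the sign is decided only at the next order, which is why the expansions must be carried to the $O(t_*^3)$ term. Re-expressing $\epsilon$ through $t_*$ (via $\epsilon(2+\epsilon)=t_*^2$ and $\epsilon(4+\epsilon)=t_*^2$) is the device that makes these cancellations transparent and the surviving terms manifestly positive. A secondary technical point, specific to $\nu\downarrow 0$, is justifying the order-limit $r_{\nu/2}(t)\to I_0(t)/I_1(t)$ and the joint continuity in $(\nu,\lambda)$ needed to push $g_\nu(4+\epsilon)$ to its limiting value; both follow from the analyticity of $I_\mu(t)$ in $\mu$ for fixed $t>0$.
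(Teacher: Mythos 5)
Your proposal is correct and follows essentially the same route as the paper: the lower bounds come from $\lambda_\nu>4-\nu$, and the upper bounds come from fixing $\lambda$ slightly above the conjectured limit, passing to the limit in $\nu$ inside $g_\nu(\lambda)$, and using small-argument expansions of the Bessel ratio together with the single-crossing property of Lemma~\ref{lem1}. The only difference is that you carry out the $\nu\downarrow 0$ case explicitly (correctly handling $r_0(t)=I_0(t)/I_1(t)$ via $I_{-1}=I_1$, where the paper's expansion (\ref{rsmall}) does not apply directly), whereas the paper dispatches it with ``a similar argument works.''
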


\begin{table}
\caption{Values of $\lambda_\nu$ for selected $\nu\in (0, 2)$.}
\begin{center}
\begin{tabular}{c|rrrrrrrrr}
\hline
$\nu$         & $0+$    & $0.25$  & $0.5$   & $0.75$  & $1$      & $1.25$  & $1.5$   & $1.75$  &  $2-$ \\
\hline
$\lambda_\nu$ & $4.000$ & $4.769$ & $4.661$ & $4.467$ & $4.217$  & $3.914$ & $3.548$ & $3.073$ & $2.000$ \\
\hline
\end{tabular}
\end{center}
\end{table}

Theorem~\ref{main}, our main result, says that a noncentral $\chi^2$ density is unimodal unless the degree of freedom is fewer than two and, at the same time, the noncentrality parameter is too large.  The $\lambda_\nu$ in Lemma~\ref{lem1} is the critical value that separates the decreasing from the bi-modal categories. 
\begin{theorem}
\label{main}
The noncentral $\chi^2$ density with $\nu>0$ degrees of freedom and noncentrality parameter $\lambda\geq 0$ is 
\begin{itemize}
\item[i]
log-concave iff $\nu\geq 2$; 
\item[ii]
strictly log-convex for small $x$ and strictly log-concave afterwards iff $0<\nu<2$ and $\lambda>0$; 
\item[iii]
decreasing iff $0<\nu\leq 2$ and $\lambda\leq \lambda_\nu$; 
\item[iv]
bi-modal iff $0< \nu < 2$ and $\lambda> \lambda_\nu$.
\end{itemize}
\end{theorem}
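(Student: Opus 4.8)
The plan is to reduce the whole classification to the signs of the first two derivatives of $\log p_{\nu,\lambda}$, expressed through the Bessel ratio $r_{\nu/2}$ of (\ref{r}). Starting from the closed form (\ref{dens1}) and using the recurrences $u I'_\alpha(u) = u I_{\alpha+1}(u) + \alpha I_\alpha(u)$ and $I_{\alpha-1}(u) - I_{\alpha+1}(u) = (2\alpha/u) I_\alpha(u)$, I expect a direct computation to give, with $u = \sqrt{\lambda x}$,
\begin{equation*}
(\log p_{\nu,\lambda})'(x) = -\frac12 + \frac{(\nu-2) + u\, r_{\nu/2}(u)}{2x}.
\end{equation*}
Differentiating once more and discarding the positive factor $1/(4x^2)$, the sign of $(\log p_{\nu,\lambda})''(x)$ should be that of
\begin{equation*}
H(u) = u^2 r'_{\nu/2}(u) - u\, r_{\nu/2}(u) - 2(\nu-2),
\end{equation*}
and the standard Riccati relation $r'_{\nu/2}(u) = 1 - \frac{\nu-1}{u} r_{\nu/2}(u) - r_{\nu/2}^2(u)$ will let me rewrite $H(u) = u^2 - \nu u\, r_{\nu/2}(u) - u^2 r_{\nu/2}^2(u) - 2(\nu-2)$. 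The decisive structural point is that $H$ depends on $u$ and $\nu$ only, never on $\lambda$; since $u = \sqrt{\lambda x}$ sweeps $(0,\infty)$ as $x$ does, the convexity pattern of $\log p_{\nu,\lambda}$ in the variable $u$ is identical for every $\lambda>0$.

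Next I would control the sign changes of $H$. The asymptotics $r_{\nu/2}(u)\sim u/\nu$ as $u\downarrow 0$ and $r_{\nu/2}(u)\to 1$ as $u\to\infty$ give $H(0^+) = 4-2\nu$ and $H(u)\to -\infty$. The crux is to show $H$ changes sign at most once. Setting $s = u\, r_{\nu/2}(u)$, which is positive on $0<\nu<2$ because both $I_{\nu/2}$ and $I_{\nu/2-1}$ are positive there, I plan to substitute the Riccati relation into $H'$ and evaluate at an arbitrary zero $u_0$ of $H$; I anticipate that the algebra collapses to the perfect square
\begin{equation*}
H'(u_0) = -2u_0\left(r_{\nu/2}(u_0) - \frac{2-\nu}{u_0}\right)^2 .
\end{equation*}
At the same zero the equation $H(u_0)=0$ should factor as $u_0^2 = (s-(2-\nu))(s+2)$, and since $s>0$ this forces $s>2-\nu$, making the square strictly positive, i.e.\ $H'(u_0)<0$. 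Thus every zero of $H$ is a \emph{strict} down-crossing, so together with the boundary values it follows that for $0<\nu<2$, $H>0$ on $(0,c_\nu)$ and $H<0$ on $(c_\nu,\infty)$ for a unique $c_\nu$ — this is part (ii) — while for $\nu\ge 2$ one has $H\le 0$ throughout, recovering Karlin's log-concavity (part i); the converse in (i) is immediate from $H(0^+)=4-2\nu>0$ when $\nu<2$.

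With the log-convex/log-concave split established, the mode count is bookkeeping on $\phi := (\log p_{\nu,\lambda})'$. Part (ii) makes $\phi$ increase then decrease, with a single interior maximum at $x_* = c_\nu^2/\lambda$; since $\phi(0^+)=-\infty$ and $\phi(\infty)=-\frac12$, the density is decreasing exactly when $\phi(x_*)\le 0$ and acquires a second, interior mode otherwise. Because $c_\nu$ and $s_\nu := c_\nu r_{\nu/2}(c_\nu)$ do not depend on $\lambda$, the peak value
\begin{equation*}
\phi(x_*) = -\frac12 + \frac{\lambda\,(s_\nu-(2-\nu))}{2 c_\nu^2}
\end{equation*}
is affine in $\lambda$ with strictly positive slope (again by $s_\nu>2-\nu$), so it vanishes at exactly one $\lambda$. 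Solving $\phi(x_*)=0$ with the factorization $c_\nu^2=(s_\nu-(2-\nu))(s_\nu+2)$ yields $\lambda = s_\nu+2$, whence $c_\nu=\sqrt{\lambda(\lambda+\nu-4)}$ and $c_\nu r_{\nu/2}(c_\nu)=\lambda-2$, i.e.\ $g_\nu(\lambda)=0$. By Lemma~\ref{lem1} this crossing is unique and equals $\lambda_\nu$, giving parts (iii) and (iv); the boundaries $\nu=2$ (where the density is log-concave with $\lambda_2=2$) and $\lambda=0$ (central case) I would verify directly.

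The step I expect to be the main obstacle is the single sign change of $(\log p_{\nu,\lambda})''$ in the second paragraph, since everything downstream is routine once $H$ is known to cross zero exactly once. That step rests entirely on the two algebraic miracles — the perfect-square form of $H'$ at a zero of $H$, and the factorization $u_0^2=(s-(2-\nu))(s+2)$ — which I expect to tease out of the Riccati equation for $r_{\nu/2}$; the positivity $r_{\nu/2}>0$, valid precisely on $0<\nu<2$, is what makes $s>2-\nu$ and closes the argument.
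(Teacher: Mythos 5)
Your argument is correct; I checked the two algebraic identities it hinges on, and both do follow from the Riccati equation (\ref{rprime}): writing $s=u_0r_{\nu/2}(u_0)$ at a zero $u_0$ of $H$, one gets $u_0H'(u_0)=-2\bigl(s-(2-\nu)\bigr)^2$ and $u_0^2=\bigl(s-(2-\nu)\bigr)\bigl(s+2\bigr)$, so $s>2-\nu$ and every zero is a strict down-crossing. (Your $H(u)$ is exactly $4x^2l''(x)$, i.e.\ the paper's (\ref{lprime2}) cleared of denominators; note also $r_{\nu/2}>0$ for every $\nu>0$, not just $\nu<2$, since the order $\nu/2-1$ exceeds $-1$.) However, your route differs from the paper's at both crux points, so a comparison is worthwhile. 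For part (ii) the paper stays in the $x$ variable and kills a putative second sign change of $l''$ by evaluating the third derivative (\ref{lprime3}) there, which via (\ref{lprime22}) collapses to $l''(x_*)\leq-\bigl(l'(x_*)+\tfrac12\bigr)^2$ and forces $\lambda=0$; you instead pass to the $\lambda$-free variable $u=\sqrt{\lambda x}$ and run the down-crossing argument on $H$ itself --- the same ``derivative-at-a-zero'' trick the paper advertises in its closing Remark, but applied to a different function with a different perfect square. For parts (iii)--(iv) the divergence is larger: the paper evaluates $l'$ at the special point $x_0=\lambda+\nu-4$, notes $l'(x_0)\propto g_\nu(\lambda)$, and sandwiches putative zeros of $l'$ around $x_0$ using (\ref{lprime22}); you evaluate $l'$ at the inflection point $x_*=c_\nu^2/\lambda$ and exploit the fact that $c_\nu$ and $s_\nu$ are $\lambda$-free, so the peak value of $l'$ is affine and strictly increasing in $\lambda$. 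This buys two things the paper's argument does not give directly: it \emph{derives} the otherwise unmotivated function $g_\nu$ (the vanishing of the peak reproduces $g_\nu(\lambda)=0$, with $c_\nu=\sqrt{\lambda(\lambda+\nu-4)}$ emerging rather than being posited), and it gives monotone dependence of the shape on $\lambda$ for free, in effect a second proof of the uniqueness half of Lemma~\ref{lem1}, which you then need only to identify your critical value with $\lambda_\nu$; conversely, the paper's $x_0$-sandwich is shorter once Lemma~\ref{lem1} is in hand. One small point to tighten when writing this up: for $\nu=2$ you have $H(0^+)=0$, and the down-crossing property alone does not exclude $H>0$ near the origin, so you need the next-order expansion (which gives $H(u)=-u^4/8+o(u^4)$) or, as you suggest, the direct verification via monotonicity of $r_1(u)/u$; for $\nu>2$ your argument is complete as stated.
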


Part (i) of Theorem~\ref{main} is well known (see Karlin 1968 or Finner and Roters 1997).  But we are unaware of any previous work offering the precise distinction between (iii) and (iv).  Figure~1 displays some noncentral $\chi^2$ densities, including a bi-modal case ($\nu=1,\ \lambda=5$). 

An extreme case of bi-modality is the noncentral $\chi^2$ with zero degrees of freedom, which is a mixture of a continuous distribution and a point mass at zero.  The probability at zero is $e^{-\lambda/2}$, and the density of the continuous part is proportional to (\ref{dens}) upon replacing $\sum_{k=0}^\infty$ by $\sum_{k=1}^\infty$ and letting $\nu=0$.  See Siegel (1979) for a detailed exploration of this distribution.

\begin{figure}
\begin{center}
\includegraphics[width=3.7in, height=5in, angle=270]{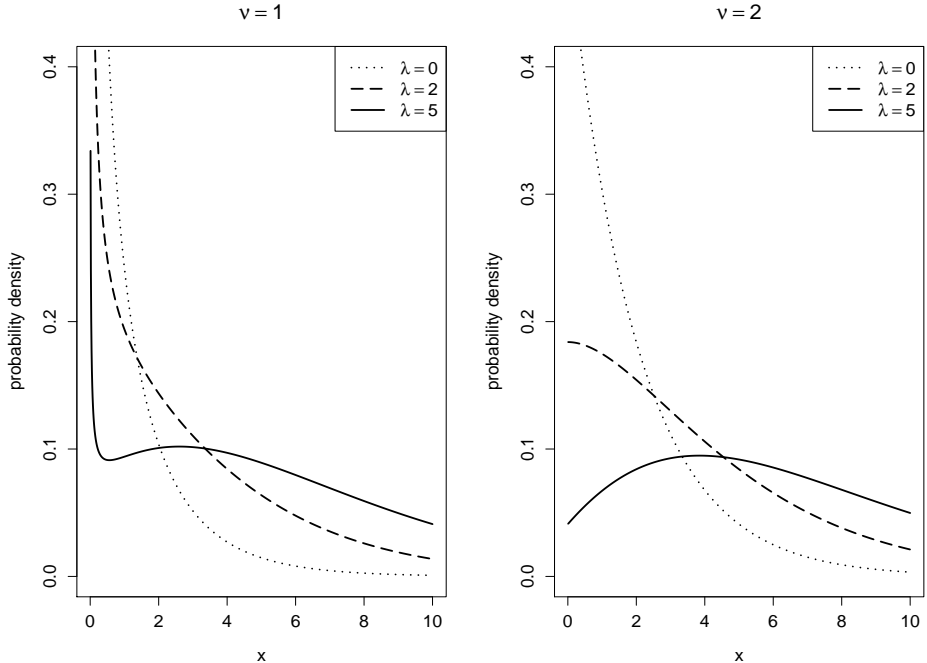}
\end{center}
\caption{Noncentral $\chi^2$ densities with $\nu$ degrees of freedom and noncentrality parameter $\lambda$.} 
\end{figure}

Next, we study the location of the modes and derive some simple but effective bounds.  Corollary~\ref{coro1} easily follows from Theorem~\ref{main} and its proof. 
\begin{corollary} 
\label{coro1}
The density $p_{\nu, \lambda}$ has 
\begin{itemize}
\item[i]
a unique mode in $(0, \infty)$ if $\nu > 2$ or $\nu=2$ and $\lambda> 2$;
\item[ii]
a unique mode at zero if $0<\nu\leq 2$ and $\lambda \leq \lambda_\nu$;
\item[iii] 
a mode at zero and another in $(0,\infty)$ if $0<\nu <2$ and $\lambda >\lambda_\nu$.  
\end{itemize}
\end{corollary}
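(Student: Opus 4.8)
The plan is to translate the analytic trichotomy of Theorem~\ref{main} into statements about the number and location of modes, using only the behavior of $p_{\nu,\lambda}$ at the two ends of $(0,\infty)$. First I would record the boundary asymptotics. From (\ref{dens1}) together with the small-argument expansion $I_\mu(z)\sim(z/2)^\mu/\Gamma(\mu+1)$, one finds $p_{\nu,\lambda}(x)\sim C_{\nu,\lambda}\,x^{\nu/2-1}$ as $x\downarrow 0$ with $C_{\nu,\lambda}>0$; hence the density vanishes at the origin when $\nu>2$, is finite and positive when $\nu=2$, and diverges when $0<\nu<2$, while $(\log p_{\nu,\lambda})'(x)\sim(\nu/2-1)/x$ tends to $+\infty$, a finite limit, or $-\infty$, respectively. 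At the upper end the factor $e^{-x/2}$ forces $p_{\nu,\lambda}(x)\to0$ and $(\log p_{\nu,\lambda})'(x)\to-1/2$ as $x\to\infty$. Because $p_{\nu,\lambda}>0$, interior modes are exactly the points where $(\log p_{\nu,\lambda})'$ changes sign from $+$ to $-$.

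Part (i) then splits into two cases. If $\nu>2$, Theorem~\ref{main}(i) makes $(\log p_{\nu,\lambda})'$ non-increasing, and since it runs from $+\infty$ at $0$ to $-1/2$ at $\infty$ it has a single zero; as the density vanishes at both ends, this yields a unique interior mode. If $\nu=2$ and $\lambda>2=\lambda_2$, log-concavity again gives unimodality, while Theorem~\ref{main}(iii) tells us the density is not decreasing; a unimodal density that is not decreasing cannot have its mode at the left endpoint, so the mode lies in $(0,\infty)$ and is unique. Part (ii) is immediate: by Theorem~\ref{main}(iii) the density is decreasing on $(0,\infty)$, so its supremum occurs at the origin (as a value if $\nu=2$, as a limit if $\nu<2$) and $x_*=0$ is the only mode.

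For part (iii), with $0<\nu<2$ and $\lambda>\lambda_\nu$, the density diverges at the origin and, by Theorem~\ref{main}(ii), $\log p_{\nu,\lambda}$ is strictly convex for small $x$ with $(\log p_{\nu,\lambda})'(0^+)=-\infty$; thus $p_{\nu,\lambda}$ decreases away from its boundary supremum, which I would record as a mode at zero. The single passage from log-convexity to log-concavity in Theorem~\ref{main}(ii) makes $(\log p_{\nu,\lambda})'$ unimodal --- increasing then decreasing --- so, starting at $-\infty$ and ending at $-1/2<0$, it can vanish at most twice, once upward and once downward. The bi-modality in Theorem~\ref{main}(iv) is precisely the statement that both crossings occur; the downward one is the unique interior mode and the upward one is an interior minimum. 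The only delicate point is here: I must be sure that the convexity change is unique, so that $(\log p_{\nu,\lambda})'$ is genuinely single-peaked and cannot manufacture a second interior maximum, and that the divergence at $0$ counts as a mode in the sense appropriate to a bi-modal density. Both facts are supplied by Theorem~\ref{main} and the asymptotics above, so once they are in hand the corollary reduces to the bookkeeping just described.
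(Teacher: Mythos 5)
Your proposal is correct and matches what the paper intends: the paper gives no separate argument, stating only that the corollary ``easily follows from Theorem~\ref{main} and its proof,'' and your bookkeeping (boundary asymptotics of $p_{\nu,\lambda}$ and $l'=(\log p_{\nu,\lambda})'$, plus the unique convexity change from part (ii) and the sign structure established for part (iv)) is exactly the deduction contained in that proof. The only point worth making explicit is that for $\nu\geq 2$ the decrease of $l'$ is strict (since $r_{\nu/2}(x)/x$ strictly decreases), which is what upgrades ``non-increasing from $+\infty$ to $-1/2$'' to a genuinely unique interior mode.
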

Part (i) of Corollary~\ref{coro1} also appears in Corollary 3.3, part (i) of van Aubel and Gawronski (2003).  
Corollary 1 implies that, if $p_{\nu, \lambda}$ has an interior mode, then so does $p_{\nu, \tilde{\lambda}}$ for 
any $\tilde{\lambda}>\lambda$.  Moreover, according to Proposition~\ref{monomode}, the mode moves to the right as $\lambda$ 
increases.  This is a known result in the log-concave case. 

\begin{proposition}
\label{monomode}
If $\nu>2$ or $0<\nu\leq 2$ and $\lambda>\lambda_\nu$ then an interior mode $M(\nu, \lambda)$ exists and, for fixed $\nu$, 
$M(\nu, \lambda)$ increases with $\lambda$. 
\end{proposition}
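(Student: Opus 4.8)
The plan is to locate the interior mode as a positive root of the logarithmic derivative of the density and then track that root as $\lambda$ varies. Differentiating (\ref{dens1}) and using the recurrence $I'_\mu(z)=I_{\mu+1}(z)+(\mu/z)I_\mu(z)$ with $\mu=(\nu-2)/2$, a short computation (the $(\nu-2)/x$ terms combine after writing $\lambda x=z^2$) gives
\[
2x\,\frac{\mathrm d}{\mathrm dx}\log p_{\nu,\lambda}(x)=\Phi(x,\lambda),\qquad \Phi(x,\lambda):=h(\sqrt{\lambda x})-x+\nu-2,
\]
where $h(z):=z\,r_{\nu/2}(z)$ and $r_{\nu/2}$ is the Bessel ratio (\ref{r}). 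Thus the positive critical points of $p_{\nu,\lambda}$ are exactly the zeros of $x\mapsto\Phi(x,\lambda)$, and an interior mode $M(\nu,\lambda)$ — which exists under the stated hypotheses by Theorem~\ref{main} and Corollary~\ref{coro1} — is the largest such zero, where $\Phi$ passes from positive to negative. By the strict log-concavity at the mode asserted in Theorem~\ref{main}, one has $\Phi_x(M,\lambda)<0$ there.

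The decisive observation is that, for fixed $x$, $\Phi(x,\lambda)$ is strictly increasing in $\lambda$: indeed $\Phi_\lambda=h'(\sqrt{\lambda x})\,x/(2\sqrt{\lambda x})$, which is positive once $h$ is shown to be strictly increasing on $(0,\infty)$. Granting this, a monotone-comparison argument finishes the proof. If $\lambda_2>\lambda_1$ then $\Phi(M(\nu,\lambda_1),\lambda_2)>\Phi(M(\nu,\lambda_1),\lambda_1)=0$, so the downward crossing of $\Phi(\cdot,\lambda_2)$ — namely $M(\nu,\lambda_2)$ — must occur to the right of $M(\nu,\lambda_1)$; equivalently, implicit differentiation gives $\partial M/\partial\lambda=-\Phi_\lambda/\Phi_x>0$ since $\Phi_\lambda>0$ and $\Phi_x<0$.

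It remains to prove the key monotonicity $h'(z)>0$, which I expect to be the main obstacle. Writing $\beta:=\nu/2-1>-1$ and $W(z):=zI'_\beta(z)/I_\beta(z)$, the recurrence $I'_\beta=I_{\beta+1}+(\beta/z)I_\beta$ gives $h(z)=W(z)-\beta$, so it suffices to show $W$ is increasing. The modified Bessel equation turns the logarithmic derivative into the Riccati relation $W'(z)=\bigl(z^2+\beta^2-W(z)^2\bigr)/z$. Setting $D(z):=z^2+\beta^2-W(z)^2=zW'(z)$, one checks that $D$ satisfies the linear equation $D'+(2W/z)D=2z$; since $2W/z=(\log I_\beta^2)'$, the factor $I_\beta^2$ is an integrating factor and $\bigl(I_\beta(z)^2D(z)\bigr)'=2z\,I_\beta(z)^2$. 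Integrating from $0$, and noting that $I_\beta(z)^2D(z)\to0$ as $z\downarrow0$ (valid precisely because $\beta>-1$, using $I_\beta(z)\sim(z/2)^\beta/\Gamma(\beta+1)$ and $D(z)\sim z^2/(\beta+1)$), yields
\[
I_\beta(z)^2\,D(z)=\int_0^z 2s\,I_\beta(s)^2\,\mathrm ds>0,\qquad z>0.
\]
Hence $D>0$, so $W'>0$ and $h'>0$, completing the plan.

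The genuinely delicate point is this positivity $h'>0$, equivalently the sharp bound $W(z)^2<z^2+\beta^2$ on the logarithmic derivative of $I_\beta$; the existence of the interior mode and the sign $\Phi_x<0$ are inherited from Theorem~\ref{main}, and the comparative-statics step is routine once $h$ is known to be strictly increasing.
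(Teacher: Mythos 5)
Your proof is correct, and its overall skeleton matches the paper's: existence is inherited from Theorem~\ref{main} and Corollary~\ref{coro1}, the problem is reduced to showing that $h(z)=zr_{\nu/2}(z)$ is strictly increasing, and the mode's monotonicity in $\lambda$ then follows by a comparison argument (the paper phrases this as: $h$ increasing implies $l'(x)$ increases in $\lambda$, ``which quickly yields the claim''). Where you genuinely diverge is in the proof of the key monotonicity $h'>0$. The paper works directly with the Riccati identity (\ref{rprime}) and runs a qualitative sign-change argument: if $h'(x_*)=0$, then algebra gives $h''(x_*)=2>0$, so any sign change of $h'$ would have to be from negative to positive; since $h'(x)/x\to 1/\nu>0$ as $x\downarrow 0$, no sign change can occur, hence $h'>0$ everywhere --- this is exactly the template advertised in the paper's closing Remark and reused in Lemma~\ref{lem1} and Proposition~\ref{prop2}. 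You instead convert the Riccati equation for $W(z)=zI_\beta'(z)/I_\beta(z)$ into a linear first-order ODE for $D=zW'$, integrate with the integrating factor $I_\beta^2$, and obtain the explicit representation
\begin{equation*}
I_\beta(z)^2\,D(z)=\int_0^z 2s\,I_\beta(s)^2\,{\rm d}s>0,
\end{equation*}
the boundary term vanishing precisely because $\beta=\nu/2-1>-1$. Your route is quantitative: it yields a closed-form expression for $h'$ (a Nicholson-type integral identity), hence strict positivity and, in principle, explicit bounds; the paper's route is more elementary (pure algebra, no integration) and deliberately showcases a reusable trick. One small caveat: your implicit-differentiation aside, $\partial M/\partial\lambda=-\Phi_\lambda/\Phi_x>0$, needs $\Phi_x<0$ at the mode, which you justify by ``strict log-concavity asserted in Theorem~\ref{main}''; part (i) of that theorem asserts only log-concavity (not strictness) when $\nu\geq 2$, so in the log-concave case this aside is not fully backed by the cited statement. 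But your primary monotone-comparison argument does not need it --- the interior mode is the largest zero of $\Phi(\cdot,\lambda)$ and $\Phi<0$ beyond it --- so the proof stands as written.
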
 
It is known that $M(\nu, \lambda)$ increases in $\nu\in (2, \infty)$ for fixed $\lambda\geq 0$ (van Aubel and Gawronski 2003).  Such monotonicity with respect to $\nu$ does not seem to extend to the bi-modal case.  In fact, inspection of Table~1 reveals that, for certain values of $\lambda$, the interior mode may disappear and then reappear as $\nu$ increases in 
$(0, 2)$. 

When an interior mode exists, Proposition~\ref{prop2} gives more information about its location. 
\begin{proposition}
\label{prop2}
If an interior mode $M(\nu, \lambda)$ exists then it 
satisfies
\begin{equation}
\label{loose}
\lambda+\nu-4 < M(\nu, \lambda).
\end{equation}
Moreover, if $\nu\geq 2$ then 
\begin{align}
\label{bound1}
(\nu-2)\left(1+\frac{\lambda}{\nu}\right) \leq M(\nu, \lambda) \leq \lambda + \nu -2.
\end{align}
If $\nu>3$ then 
\begin{equation}
\label{bound2}
\lambda +\nu -3 < M(\nu, \lambda). 
\end{equation}
If $0<\nu<2$ and $\lambda>\lambda_\nu$ then 
\begin{equation}
\label{bound3}
M(\nu, \lambda) < \lambda +\nu -3. 
\end{equation}
\end{proposition}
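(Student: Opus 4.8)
The plan is to reduce every assertion to the sign of one explicit function. Writing $z=\sqrt{\lambda x}$ and differentiating $\log p_{\nu,\lambda}$ from (\ref{dens1}) with the recurrence $I'_{\mu}(z)=I_{\mu+1}(z)+(\mu/z)I_{\mu}(z)$ (taking $\mu=(\nu-2)/2$ so that $I_{\mu+1}/I_\mu=r_{\nu/2}$), I expect the logarithmic derivative to collapse to
$$\frac{d}{dx}\log p_{\nu,\lambda}(x)=\frac{1}{2x}\,\phi(x),\qquad \phi(x)\equiv\sqrt{\lambda x}\,r_{\nu/2}(\sqrt{\lambda x})-x+\nu-2.$$
Hence an interior mode is a root of $\phi$, and since $\phi$ and $(\log p_{\nu,\lambda})'$ agree in sign on $(0,\infty)$, the interior mode $M(\nu,\lambda)$ is the largest point where $\phi$ crosses zero from $+$ to $-$. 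Every claimed bound then becomes a sign statement: $\phi(c)>0$ forces $M>c$, and $\phi(c)<0$ forces $M<c$. The first dividend is immediate, for at $c=\lambda+\nu-4$ one has $\phi(\lambda+\nu-4)=\sqrt{\lambda(\lambda+\nu-4)}\,g_\nu(\lambda)$, so in the bi-modal regime Lemma~\ref{lem1} gives $\phi>0$ there, which is exactly (\ref{loose}).

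For (\ref{loose}) when $\nu\ge2$ and for (\ref{bound1}), I would substitute the candidate endpoints and reduce each to an elementary inequality for $z\,r_{\nu/2}(z)$. At $c=\lambda+\nu-2$ the requirement is $z\,r_{\nu/2}(z)<\lambda$; at $c=(\nu-2)(1+\lambda/\nu)$ it is $z\,r_{\nu/2}(z)>\lambda(\nu-2)/\nu$. Both follow from the two-sided estimate
$$\frac{z}{\nu/2+\sqrt{(\nu/2)^2+z^2}}<r_{\nu/2}(z)<\frac{z}{\nu/2-1+\sqrt{(\nu/2-1)^2+z^2}},$$
the upper half requiring $\nu\ge2$. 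After clearing the radical each reduces to a transparent rational comparison; indeed the upper estimate is exactly tight at $x=\lambda+\nu-2$, which explains why these thresholds are so clean. I would establish the ratio bounds themselves from the Riccati relation $r_\mu'=1-r_\mu^2-\frac{2\mu-1}{z}r_\mu$ by verifying that each side above is a super-/sub-solution with the correct behaviour at $z=0$ and $z=\infty$.

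The genuine obstacle is (\ref{bound2}) and (\ref{bound3}), which share the threshold $c=\lambda+\nu-3$, where $\phi(\lambda+\nu-3)=\sqrt{\lambda(\lambda+\nu-3)}\,r_{\nu/2}(\sqrt{\lambda(\lambda+\nu-3)})-(\lambda-1)$. The difficulty is that this quantity is second-order small: from $r_{\nu/2}(z)=1-\frac{\nu-1}{2z}+\frac{(\nu-1)(\nu-3)}{8z^2}+O(z^{-3})$ one finds, at $z=\sqrt{\lambda(\lambda+\nu-3)}$, that $z\,r_{\nu/2}(z)-(\lambda-1)=\frac{\nu-3}{4\lambda}+O(\lambda^{-2})$, positive when $\nu>3$ and negative when $\nu<3$ — precisely the split between (\ref{bound2}) and (\ref{bound3}). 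The catch is that no first-order ratio bound can certify this sign: the sharp Amos-type bounds $\tfrac{z}{(\mu-1/2)+\sqrt{(\mu+1/2)^2+z^2}}<r_\mu(z)<\tfrac{z}{(\mu-1/2)+\sqrt{(\mu-1/2)^2+z^2}}$ (with $\mu=\nu/2$) agree with $r_{\nu/2}$ only through the $1/z$ term and, after squaring, leave a $\lambda$-dependent remainder rather than a clean multiple of $\nu-3$. What is needed is a comparison function for the Riccati equation that is sharp to the $1/z^2$ term and carries the correct sign of $\nu-3$, engineered so that the squared inequality collapses to $\pm(\nu-3)$; constructing and justifying this refined estimate is, I expect, the technical heart of the argument.

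Finally, turning a sign of $\phi$ into a bound on $M$ needs one bookkeeping point in the bi-modal case, where $\phi$ has two sign changes, at a valley $v$ and at $M$ with $v<M$. By Theorem~\ref{main}(ii) the density is log-convex then log-concave, so $(\log p_{\nu,\lambda})'$ is unimodal and strictly positive only on $(v,M)$; consequently $\phi(c)<0$ together with $c>v$ already forces $c>M$. I would secure $c=\lambda+\nu-3>v$ using the loose bound (\ref{loose}) together with the location of the single inflection of $\log p_{\nu,\lambda}$, so that the negativity of $\phi$ at $\lambda+\nu-3$ indeed yields (\ref{bound3}); the log-concave cases ($\nu\ge2$, including (\ref{bound2})) are simpler since $\phi$ then has a single sign change.
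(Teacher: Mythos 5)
Your reduction to the sign of $\phi(x)=2x\,l'(x)$ is sound, and your two key evaluations, $\phi(\lambda+\nu-4)=\sqrt{\lambda(\lambda+\nu-4)}\,g_\nu(\lambda)$ and $\phi(\lambda+\nu-3)=\sqrt{\lambda(\lambda+\nu-3)}\,r_{\nu/2}(\sqrt{\lambda(\lambda+\nu-3)})-(\lambda-1)$, agree with the paper's. Your handling of (\ref{loose}) in the bi-modal case via Lemma~\ref{lem1} is correct, and your plan for (\ref{bound1}) via two-sided Amos-type ratio bounds looks workable (the paper itself does not prove (\ref{bound1}); it cites Sen 1989). But the proposal stops exactly where the proposition's real content begins: for (\ref{bound2}) and (\ref{bound3}) you correctly diagnose that any first-order bound on $r_{\nu/2}$ is too crude to capture the $\pm(\nu-3)$ sign, and then you explicitly defer the construction of the needed ``refined comparison function.'' That deferral is a genuine gap --- the two hardest and newest bounds are left unproved.

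The missing idea, which the paper supplies, is not a sharper pointwise bound on $r_{\nu/2}$ at all: it is to regard $h(\lambda)\equiv r_{\nu/2}(\sqrt{\lambda z})-(\lambda-1)/\sqrt{\lambda z}$, with $z=\lambda+\nu-3$, as a function of the \emph{parameter} $\lambda$ and count its sign changes. At any zero $\lambda_*$ of $h$ one may substitute $r_{\nu/2}(\sqrt{\lambda_* z_*})=(\lambda_*-1)/\sqrt{\lambda_* z_*}$ into the Riccati identity (\ref{rprime}) to obtain $r'_{\nu/2}(\sqrt{\lambda_* z_*})=(\nu-2)/(\lambda_* z_*)$ exactly, whence $h'(\lambda_*)=(\nu-3)\big/\big(2\sqrt{\lambda_*^3 z_*}\big)$. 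So at every zero of $h$ the derivative carries precisely the sign of $\nu-3$: for $\nu>3$ all sign changes would be from $-$ to $+$, incompatible with $h(\lambda)\to+\infty$ as $\lambda\downarrow 0$, hence $h>0$ throughout, which is (\ref{bound2}); for $0<\nu<2$ all sign changes would be from $+$ to $-$, incompatible with $h<0$ at $\lambda=4-\nu$ (a single one-variable numerical check), hence $h<0$ throughout, which gives (\ref{bound3}). The exact cancellation you hoped to engineer into a refined Bessel inequality appears automatically when one differentiates in $\lambda$ at a zero of $h$ --- the same device the paper uses to prove Lemma~\ref{lem1} and Proposition~\ref{monomode}, so no new special-function estimate is required. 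A smaller point: your bookkeeping for converting $\phi(\lambda+\nu-3)<0$ into (\ref{bound3}) is vaguer than necessary; the clean route is that the interior local minimum $m$ of $l$ satisfies $l'(m)=0$ and $l''(m)\geq 0$, so (\ref{lprime22}) forces $m\leq\lambda+\nu-4<\lambda+\nu-3$, and then $l'(\lambda+\nu-3)<0$ places $\lambda+\nu-3$ outside the interval $(m,M(\nu,\lambda))$ where $l'>0$, i.e., beyond the interior mode.
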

Sen (1989) derives (\ref{bound1}) and (\ref{loose}) for $\nu\geq 2$.  Van Aubel and Gawronski (2003, Theorem 6.1, part (i)) present (\ref{bound2}) for $\nu > 4$. The bounds (\ref{loose}) and (\ref{bound3}) extend existing results to the bi-modal case.  The two bounds in (\ref{bound1}) become tight as $\lambda\downarrow 0$ and in fact coincide for $\lambda=0$.  The bounds (\ref{bound2}) and (\ref{bound3}) are remarkable in view of the asymptotic formula 
$$M(\nu, \lambda)=\lambda+\nu-3+O(\lambda^{-1}),\quad {\rm as}\quad \lambda\to \infty.$$
We emphasize that $M(\nu, \lambda)$ refers to the interior (nonzero) mode. 

In Section~5 we prove the main results.  The method is elementary although it does rely on properties of modified Bessel functions.  Part of the argument may be helpful in establishing inequalities in other contexts.  Of course, it would be interesting to derive similar results for noncentral $F$ or $t$ distributions. 

\section{Proofs} 
Define $l(x)=\log p_{\nu, \lambda}(x)$.  Let us calculate some derivatives in preparation for the main proofs.  By (\ref{dens1}) we have 
\begin{align}
\nonumber
l'(x) &= -\frac{1}{2}+\frac{\nu-2}{4x} +\frac{\sqrt{\lambda}}{2\sqrt{x}} \frac{I'_{(\nu-2)/2}(\sqrt{\lambda x})}{I_{(\nu-2)/2}(\sqrt{\lambda x})}\\
\label{lprime}
&= -\frac{1}{2}+\frac{\nu-2}{2x} + \frac{\sqrt{\lambda}}{2\sqrt{x}} r_{\nu/2}(\sqrt{\lambda x})
\end{align}
where (\ref{lprime}) follows from (\ref{r}) and the formula (Abramowitz and Stegun 1972, (9.6.26))
\begin{equation*}
I_{\nu-1}'(x) = I_\nu(x) + \frac{\nu-1}{x} I_{\nu-1}(x).
\end{equation*}
We differentiate (\ref{lprime}) and get 
\begin{align}
\nonumber
l''(x) &= \frac{2-\nu}{2x^2} + \frac{\lambda}{4x} r'_{\nu/2}(\sqrt{\lambda x}) - \frac{\sqrt{\lambda}}{4x^{3/2}} r_{\nu/2}(\sqrt{\lambda x})\\
\label{lprime2}
&=\frac{2-\nu}{2x^2} + \frac{\lambda}{4x} -\frac{\nu\sqrt{\lambda}}{4x^{3/2}} r_{\nu/2}(\sqrt{\lambda x}) - \frac{\lambda}{4x} r^2_{\nu/2}(\sqrt{\lambda x})\\
\label{lprime22}
&= \frac{\lambda +\nu-4}{4x} -\frac{1}{4} - l'(x) \left(l'(x)+1-\frac{\nu-4}{2x}\right)
\end{align}
where (\ref{lprime2}) holds by the identity
(see, e.g., Amos 1974) 
\begin{equation}
\label{rprime}
r'_\nu(x) = 1-\frac{2\nu-1}{x} r_\nu(x) - r^2_\nu(x),
\end{equation}
and (\ref{lprime22}) holds by (\ref{lprime}).  
Differentiating (\ref{lprime22}) yields
\begin{equation}
\label{lprime3}
l'''(x) = -\frac{1}{4x^2}\left(\lambda+\nu-4 + 2(\nu-4) l'(x)\right) -l''(x)\left(2l'(x)+1-\frac{\nu-4}{2x}\right).
\end{equation}

We also use the following results, which are easily derived from modified Bessel function asymptotics (see, e.g., Abramowitz and Stegun 1972, (9.7.1)). 
\begin{align}
\label{rsmall}
r_{\nu/2}(x) &= \frac{x}{\nu} -\frac{x^3}{\nu^2(\nu+2)} + o(x^3),\quad {\rm as}\ x\downarrow 0;\\
\label{rasymp}
r_{\nu/2}(x) &= 1 - \frac{\nu-1}{2x} + o(x^{-1}),\quad {\rm as}\ x\to \infty.
\end{align}

\begin{proof}[Proof of Lemma~\ref{lem1}]
Let $\nu\in (0, 2)$.  As $\lambda\downarrow (4-\nu)$, we have $g_\nu(\lambda)\to -\infty$ by (\ref{rsmall}); as $\lambda\to\infty$ we have $\lambda g_\nu(\lambda)\to 1/2$ by (\ref{rasymp}).  Thus $g_\nu(\lambda)$ changes signs at least once, from negative to positive.  For any $\lambda_*$ such that $g_{\nu}(\lambda_*)=0$, we have $r_{\nu/2}(t)=(\lambda_* -2)/t$, where 
$t\equiv\sqrt{\lambda_*(\lambda_*+\nu-4)}.$  A calculation using (\ref{rprime}) yields 
$$r'_{\nu/2}(t) = \frac{\lambda_* +2\nu-6}{t^2}.$$
Thus 
\begin{align*}
\frac{\partial g_\nu(\lambda_*)}{\partial \lambda} &= r'_{\nu/2}(t) \left(\frac{2\lambda_* +\nu-4}{2 t}\right) -\frac{1}{t} +(\lambda_* -2) \left(\frac{2\lambda_* +\nu-4}{2 t^3}\right)\\
&=\frac{\sqrt{\lambda_* +\nu-4}}{\lambda_*^{3/2}} > 0.
\end{align*}
Hence every sign change of $g_{\nu}(\lambda)$ is from $-$ to $+$.  There cannot be two or more sign changes, because one of them has to be from $+$ to $-$. 
\end{proof}

\begin{proof}[Proof of Proposition~\ref{prop1}]
We only prove $\lim_{\nu\uparrow 2} \lambda_\nu=2$; a similar argument works for $\lim_{\nu\downarrow 0} \lambda_\nu = 4$. 
For fixed $\lambda >2$ we have 
\begin{equation}
\label{limg}
\lim_{\nu\uparrow 2} g_\nu(\lambda) =r_1(\sqrt{\lambda(\lambda-2)}) - \sqrt{\frac{\lambda-2}{\lambda}}.
\end{equation}
By (\ref{rsmall}), the right hand side of (\ref{limg}) is positive for all sufficiently small $\lambda>2$.  For such $\lambda$ we have $g_\nu(\lambda)>0$ if $\nu$ is sufficiently close to 2, and $\lambda_\nu < \lambda$ by Lemma~\ref{lem1}.  That is, $\limsup_{\nu\uparrow 2} \lambda_\nu \leq \lambda$.  The claim follows by choosing $\lambda \downarrow 2$.  (Obviously $\lambda_\nu>2$ and hence $\liminf_{\nu\uparrow 2} \lambda_\nu\geq 2$.) 
\end{proof}

\begin{proof}[Proof of Theorem~\ref{main}]
We only prove the ``if'' parts because the ``only if'' parts hold after examining certain boundary cases and, if necessary, invoking the ``if'' parts.  For example, to show the ``only if'' statement in part (iii), note that if $\nu>2$, or $\nu=2$ and $\lambda > \lambda_\nu=2$, then by (\ref{lprime}) we have $l'(x) > 0 $ for small $x>0$.  Hence the density cannot be decreasing. 

Part (i) can be proved by showing that $r_{\nu/2}(x)/x$ decreases in $x \in (0,\infty)$ (see Saxena and Alam 1982 or Yu 2011).  By (\ref{lprime}), if $\nu\geq 2$ then $l'(x)$ decreases in $x$, as required. 

To prove part (ii) let us assume $0<\nu<2$ and $\lambda>0$.  As $x\downarrow 0$ we have, by (\ref{rsmall}) and (\ref{lprime2}), $x^2 l''(x)\to (2-\nu)/2 >0$.  As $x\to \infty$ we use (\ref{rasymp}) and (\ref{lprime2}) to obtain $x^{3/2} l''(x)\to -\sqrt{\lambda}/4 <0$.  Thus $l''(x)$ crosses zero at least once from above.  If it crosses zero more than once, then there exists $x_*$ such that $l''(x_*)=0$ and $l'''(x_*)\geq 0$.  By (\ref{lprime3}) we have
$$l'(x_*)\geq \frac{\lambda+\nu-4}{2(4-\nu)}.$$
Putting this in (\ref{lprime22}) we get 
\begin{align*}
l''(x_*) \leq &\frac{\lambda+\nu-4}{4x_*}-\frac{1}{4} - l'(x_*)(l'(x_*)+1) + \frac{\lambda+\nu-4}{2(4-\nu)} \left(\frac{\nu-4}{2x_*}\right)\\
= & - \left(l'(x_*)+\frac{1}{2}\right)^2\leq 0.
\end{align*}
Thus equalities must hold and we have 
$$\frac{\lambda+\nu-4}{2(4-\nu)}=l'(x_*)=-\frac{1}{2},$$ 
which yields $\lambda=0$, a contradiction.  It follows that $l''(x)$ has exactly one sign change on $(0,\infty)$, and at the change point $l''' < 0$, yielding a strict $(+,\, -)$ sign pattern. 

Let us now assume $0<\nu< 2$ and $\lambda< \lambda_\nu$ and prove the ``if'' part of (iii).  The argument for $\nu=2$ is similar, and the case $\lambda=\lambda_\nu$ follows by taking a limit.  As $x\downarrow 0$ we have $l'(x)\to -\infty$; 
as $x\to \infty$ we have $l'(x)\to -1/2$ by (\ref{lprime}).  If $l'(x)$ does become positive, then there exists $0<x_1\leq x_2$ (entry and exit points) such that $l'(x_1)=l'(x_2)=0$ and $l''(x_1)\geq 0\geq l''(x_2)$.  
By (\ref{lprime22}) we have $x_1 \leq \lambda+\nu-4\equiv x_0\leq x_2$, which implies $x_0>0$.  By Lemma~\ref{lem1}
\begin{equation}
\label{eqn1}
l'(x_0)=\frac{\sqrt{\lambda}}{2\sqrt{x_0}} g_\nu(\lambda)< 0.
\end{equation}
We can rule out $l''(x_2)=0$ because it implies $x_2=x_0$ by (\ref{lprime22}) which contradicts (\ref{eqn1}).  Because $l'(x_0)<0,\ l'(x_2)=0,\ l''(x_2)< 0,$ there exists $x^*\in (x_0, x_2)$ such that $l'(x^*)=0$ and $l''(x^*)\geq 0$.  By (\ref{lprime22}), however, we obtain $x^* \leq x_0$, which is a contradiction. 

Finally, suppose $0<\nu<2$ and $\lambda > \lambda_\nu$.  Then $l(x)\to \infty$ and $l'(x)\to -\infty$ as $x\downarrow 0$.  So one mode is at zero.  In view of part (ii), let $\tilde{x}\in (0,\infty)$ be the unique solution of $l''(x)=0$, so that $l''(x)> 0$ or $<0$ according as $x<\tilde{x}$ or $> \tilde{x}$.  If $l'(\tilde{x})\leq 0$ then by (\ref{lprime22}) we have $\tilde{x}\geq \lambda+\nu-4\equiv x_0$.  (We always have $l'(x)+1-(\nu-4)/(2x)>0$ in (\ref{lprime22}) by (\ref{lprime}).) 
By Lemma~\ref{lem1} 
$$l'(\tilde{x})\geq l'(x_0)= \frac{\sqrt{\lambda}}{2\sqrt{x_0}} g_\nu(\lambda) > 0,$$
which is a contradiction.  Thus $l'(\tilde{x})>0$, implying that $l(x)$ is convex with an interior minimum in $(0, \tilde{x})$ and concave with an interior maximum (the second mode) in $(\tilde{x},\infty)$.  This proves part (iv). 
\end{proof}

\begin{proof}[Proof of Proposition~\ref{monomode}]
Existence of the interior mode follows from Corollary~\ref{coro1}.  To show monotonicity of $M(\nu, \lambda)$, we prove that $h(x)\equiv xr_\nu(x)$ increases in $x\in (0,\infty)$ for all $\nu>0$. This implies that $l'(x)$ as given by (\ref{lprime}) increases in $\lambda$, which quickly yields the claim.  We have $h'(x) =r_\nu(x) + x r'_\nu(x)$.  Suppose $h'(x_*)=0$ for some $x_*>0$.  Then $r'_\nu(x_*)=-r_\nu(x_*)/x_*$ and, by (\ref{rprime}), $r^2_\nu(x_*)+(2\nu-2)r_\nu(x_*)/x_* -1 = 0.$  We get 
\begin{align*}
h''(x_*) &=1-(2\nu-2)r'_\nu(x_*) - 2x_* r_\nu(x_*) r'_\nu(x_*)-r^2_\nu(x_*)\\
& = 2
\end{align*}
after simple algebra.  Hence any sign change of $h'(x)$ is from negative to positive.  However, as $x\downarrow 0$ we have 
$h'(x)/x \to 1/\nu>0$.  This precludes possible sign changes.  Hence $h'(x)>0$, as required.
\end{proof}

\begin{proof}[Proof of Proposition~\ref{prop2}]
Assume an interior mode exists.  We refer to $\{(\nu, \lambda):\ \nu>2\ {\rm or}\ \nu=2,\ \lambda>2\}$ as ``the log-concave case'' and $\{(\nu, \lambda):\ 0<\nu<2,\ \lambda>\lambda_\nu\}$ as the bi-modal case.  Denote $x_0=\lambda+\nu-4$.  If $x_0\leq 0$ then (\ref{loose}) is trivial.  If $x_0>0$ and $l'(x_0)>0$ then 
$x_0< M(\nu,\lambda)$ because the density declines after $M(\nu,\lambda)$.  Suppose $x_0>0$ and $l'(x_0)\leq 0$.  By (\ref{lprime22}), $l''(x_0)\geq 0$, implying a bi-modal case, and $x_0$ belongs to the declining phase in the log-convex region.  By Theorem~\ref{main} we have $x_0< M(\nu, \lambda)$.  Thus (\ref{loose}) holds. 

It remains to prove (\ref{bound2}) and (\ref{bound3}) because Sen (1989) has established (\ref{bound1}).
Denote $z=\lambda+\nu-3$.  In the case of $\nu>3$ we need to show $l'(z)>0$.  In the bi-modal case we need $l'(z)<0$ instead.  This suffices because, letting $m$ be the local minimum of $l(x)$, we have $m\leq x_0\equiv \lambda+\nu-4,$ which follows from $l'(m)=0,\ l''(m)\geq 0$ and (\ref{lprime22}).  Hence $m < z$, and (\ref{bound3}) holds if $l'(z) < 0$.  By (\ref{lprime}), 
$$2\sqrt{\frac{z}{\lambda}} l'(z) = r_{\nu/2}(\sqrt{\lambda z}) - \frac{\lambda -1}{\sqrt{\lambda z}}\equiv h(\lambda).$$
Let us follow the proofs of Lemma~\ref{lem1} and Proposition~\ref{monomode} in analyzing the sign pattern of $h(\lambda)$. 

Suppose $0< \nu <2$ and $\lambda> 4-\nu$.  As $\lambda \downarrow (4-\nu)$ we get 
\begin{equation}
\label{nu}
h(\lambda) \to r_{\nu/2}(\sqrt{4-\nu})-\frac{3-\nu}{\sqrt{4-\nu}}. 
\end{equation}
One can verify numerically that the right hand side of (\ref{nu}) is negative for all $0<\nu<2$ (there is only one variable over a small range).  Suppose there exists some $\lambda_*\in (4-\nu, \infty)$ such that $h(\lambda_*)=0$.  Then $r_{\nu/2}(\sqrt{\lambda_* z_*})=(\lambda_* -1)/\sqrt{\lambda_* z_*}$ where $z_*=\lambda_*+\nu-3$, and by (\ref{rprime}), $r'_{\nu/2}(\sqrt{\lambda_* z_*}) =  (\nu-2)/(\lambda_* z_*)$.  Some algebra yields 
\begin{align}
\label{hprime1}
h'(\lambda_*) =& r'_{\nu/2}(\sqrt{\lambda_* z_*}) \frac{2\lambda_* +\nu -3}{2\sqrt{\lambda_* z_*}} +\frac{(1-\nu)\lambda_* + 3-\nu}{2(\lambda_* z_*)^{3/2}} \\
\label{hprime2}
=& \frac{\nu-3}{2\sqrt{\lambda_*^3 z_*}} <0. 
\end{align}
Thus any sign change of $h(\lambda)$ is from $+$ to $-$.  Since $h(\lambda)$ is negative as $\lambda\downarrow (4-\nu)$, there cannot be any sign change.  This proves $l'(z)< 0$ for $\lambda>\lambda_\nu> 4-\nu$. 

Finally, let us assume $\nu>3$.  The calculation (\ref{hprime1})--(\ref{hprime2}) now shows that any sign change of $h(\lambda)$ is from $-$ to $+$.  But $h(\lambda) \to \infty$ as $\lambda \downarrow 0$.  Thus sign changes are impossible and we conclude that $l'(z)>0$, as required. 
\end{proof}

{\bf Remark.}  To prove an inequality $h(\lambda)> 0$ for all $\lambda$, we show that (i) it holds for small (respectively, large) $\lambda$; (ii) $h'(\lambda)$ is positive (respectively, negative) assuming $h(\lambda)=0$.  This simple method works well for functions involving $r_\nu(x)$ and seems applicable to other problems. 



\begin{thebibliography}{10}
\bibitem{AS72}
M. Abramowitz and I. Stegun, {\it Handbook of Mathematical Functions
with Formulas, Graphs, and Mathematical Tables}, 9th ed. New York,
NY: Dover Press, 1972. 

\bibitem{A74}
D. E. Amos, ``Computation of modified Bessel functions and their ratios,'' {\it Math. Comp.}, vol. 28, pp. 239–-251, 1974. 


\bibitem{DGS84}
S. Das Gupta and S. K. Sarkar, ``On TP$_2$ and log-concavity,'' in: Y. L. Tong (Ed.), {\it Inequalities in Statistics and Probability}, Lecture Notes-monograph series, Institute of Mathematical Statistics, vol. 5, pp. 54--58, 1984. 

\bibitem{D92}
C. G. Ding, ``Algorithm AS275: Computing the non-central chi-squared distribution function,'' 
{\it Appl. Statist.}, vol. 41, pp. 478–-482, 1992. 

\bibitem{FR97}
H. Finner and M. Roters, ``Log-concavity and inequalities for chi-square, F and beta distributions 
with applications in multiple comparisons,'' {\it Statistica Sinica}, vol. 7, pp. 771--787, 1997.
 
\bibitem{H92}
C.W. Helstrom, ``Computing the generalized Marcum Q-function,'' {\it IEEE
Trans. Inf. Theory}, vol. 38, pp. 1422--1428, 1992.

\bibitem{JKB}
N. L. Johnson, S. Kotz and N. Balakrishnan, {\it Continuous Univariate Distributions}, 
vol. 2, 2nd edition, Wiley, New York, 1995. 


\bibitem{K68}
S. Karlin, {\it Total Positivity}. Stanford: Stanford Univ. Press, 1968. 

\bibitem{KB96}
L. Kn\"{u}sel and B. Bablok, ``Computation of the noncentral gamma distribution,'' 
{\it SIAM J. Sci. Comput.}, vol. 17, pp. 1224--1231, 1996. 

\bibitem{LK06}
R. Li and P. Y. Kam, ``Computing and bounding the generalized Marcum
Q-function via a geometric approach,'' in {\it Proc. IEEE Int. Symp. Inf.
Theory} (ISIT), Seattle, WA, USA, 2006, pp. 1090--1094. 

\bibitem{M60}
J. I. Marcum, ``A statistical theory of target detection by pulsed radar,''
{\it IRE Trans. Inf. Theory}, vol. 6, pp. 59--267, 1960. 

\bibitem{N75}
A. H. Nuttall, ``Some integrals involving the $Q_M$ function,'' 
{\it IEEE Trans. Inf. Theory}, vol. 21, pp. 95--96, 1975. 

\bibitem{R99}
A. H. M. Ross, ``Algorithm for calculating the noncentral chi-square distribution,'' 
{\it IEEE Trans. Inf. Theory}, vol. 45, pp. 1327--1333, 1999. 

\bibitem{SA82}
K. M. L. Saxena and K. Alam, ``Estimation of the noncentrality parameter of a chi-squared distribution,'' 
{\it Ann. Statist.}, vol. 10, pp. 1012–-1016, 1982.

\bibitem{PKS}
P. K. Sen, ``The mean-median-mode inequality and noncentral chi square distributions,'' 
{\it Sankhya A}, vol. 51, pp. 106--114, 1989. 

\bibitem{S89}
D. A. Shnidman, ``The calculation of the probability of detection and
the generalized Marcum Q-function,'' {\it IEEE Trans. Inf. Theory}, vol. 35,
pp. 389--400, 1989. 

\bibitem{S79}
A. F. Siegel, ``The noncentral chi-squared distribution with zero degrees of freedom and
testing for uniformity,'' {\it Biometrika}, vol. 66, pp. 381--386, 1979. 

\bibitem{SA03}
M. K. Simon and M.-S. Alouini, ``Some new results for integrals involving
the generalized Marcum Q function and their application to performance
evaluation over fading channels,'' {\it IEEE Trans. Wireless Commun.}, vol. 2,
pp. 611-615, 2003.

\bibitem{SBZ}
Y. Sun, A. Baricz and S. Zhou, ``On the monotonicity, log-concavity and tight 
bounds of the generalized Marcum and Nuttall Q-functions,'' {\it IEEE Trans. 
Inform. Theory}, vol. 56, no. 3, pp. 1166--1186, 2010. 

\bibitem{vAG03}
A. van Aubel and W. Gawronski, ``Analytic properties of noncentral distributions,'' 
{\it Appl. Math. Comp.}, vol. 141, pp. 3–-12, 2003. 

\bibitem{Y11}
Y. Yu, ``On log-concavity of the generalized Marcum Q function,'' {\it Preprint}, 2011, arXiv:1105.5762. 

\end{thebibliography}
\end{document}